\newacronym{ex}{EX}{example}
\newtheorem{theorem}{Theorem}
\newtheorem{corollary}{Corollary}
\newtheorem{definition}{Definition}
\newtheorem{lemma}{Lemma}
\newcommand{\op}{\operatorname}
\title{The virtual cactus group and Littelmann paths}
\author{Jacinta Torres}
\address{Institute of Mathematics, Jagiellonian University in Krakow}
\email{\href{jacinta.torres@uj.edu.pl}{jacinta.torres@uj.edu.pl}}
\begin{document}

\begin{abstract}
We define a \textit{virtual cactus group} and show that the cactus group action on Littelmann paths is compatible with the virtualization map defined by Pan--Scrimshaw \cite{PS18}. Our \textit{virtual cactus group} generalizes the group with the same name defined  for the symplectic Lie algebra in \cite{AzenhasTarighatTorres22}. 
\end{abstract}

\subjclass[2000]{05E10, 05E05, 17B37}
\keywords{cactus group, normal crystals, virtualization, Littelmann paths, Sch\"utzenberger-Lusztig involution. }

\maketitle

\section{Introduction}
\label{introduction}
Let $\mathfrak{g}$ be a finite dimensional, complex, semisimple Lie algebra. Let $D$ be the Dynkin diagram associated to the root system of
$\mathfrak{g}$, $R$ its root system,   $\Delta=\{\alpha_i:i\in D\} \subset R$ the set of simple roots, $W = W(R)$ its Weyl group, generated by the simple reflections $\{ r_{i}: i \in D
\}$, and $w_{0} \in W$ the longest element of the Weyl group. For a connected sub-diagram $J\subseteq D$, of $D$, denote by $\theta_J:J\rightarrow J$ the unique Dynkin diagram automorphism that satisfies
$\alpha_{\theta_J (j)} = -w_0^J\alpha_j$, for any node $j\in J$, where $w_0^J$ is the longest element of the parabolic subgroup $W^J\subseteq
W$  (the Weyl group for  $\mathfrak{g}$ restricted to $J$) \cite{BB05}. This leads to the following definition by Halacheva.

\begin{definition}\cite{Halacheva2020}
\label{def:cactus}
The \textit{cactus group} $J_{D}$ is the group with generators $s_{J}$, one for each connected  
subdiagram $J$ of $D$, and relations given as follows: 
\begin{itemize}
\item[1.] $s^{2}_{J} = 1$;
\item[2.] $s_{I}s_{J} = s_{J}s_{I}$ for $I,J \subseteq D$ connected subsets if the union $J \cup I$ is disconnected;
\item[3.] $s_{I}s_{J} = s_{\theta_{I}(J)}s_{I}$ if $J \subset I$. 
\end{itemize}

\end{definition}

 \Cref{def:cactus} is a generalization of the original definition of the cactus group defined by Henriques--Kamnitzer in \cite{HenriquesKamnitzer2004}, which was denoted by $J_{n}$ and which corresponds to the cactus group associated to the Dynkin diagram of type $A_{n-1}$.

\subsection{Main results and aim of the paper.}
In this paper we will be concerned with pairs of Dynkin diagrams $(X,Y)$ related by \textit{folding}, that is, there is an injection of sets of nodes $ X \hookrightarrow Y$ which induces an injection of the corresponding Lie algebras  $\mathfrak{g}_X \hookrightarrow \mathfrak{g}_Y$ as described in \cite{BSch17}.  The main result and aim of this paper is the ``virtualization'' of the cactus group $J_{X}$, as defined by Halacheva in \cite{Halacheva2020}, and of its action on $\mathfrak{g}_{X}$-crystals, transferring certain results obtained for the case $C_{n} \hookrightarrow A_{2n-1}$ in \cite{AzenhasTarighatTorres22} to the more general setup described above. This is carried out in \Cref{thm:virtualcacti} and \Cref{thm:commutativediagram}. It consists in defining a group monomorphism $J_{X} \hookrightarrow J_{Y}$ compatible with the action of $J_{X}$ and $J_{Y}$ on $\mathfrak{g}_{X}$, respectively $\mathfrak{g}_{Y}$-crystals. Moreover, by using the virtualization map on Littelmann paths described by Pan--Scrimshaw in \cite{PS18}, instead of the Baker virtualization map used in \cite{AzenhasTarighatTorres22} for Kashiwara--Nakashima tableaux, we obtain a simple rule to compute the partial Sch\"utzenberger--Lusztig involutions of Littelmann paths in $\mathfrak{g}_{X}$-crystals in terms of partial Sch\"utzenberger--Lusztig involutions of Littelmann paths in $\mathfrak{g}_{Y}$-crystals. This is carried out in \Cref{thm:commutativediagram}.

\section{Acknowledgements}
We thank Olga Azenhas and Mojdeh Tarighat Feller for many inspiring conversations, and ICERM for hosting the meeting titled ``Research Community in Algebraic Combinatorics." We especially thank Olga Azenhas for teaching the author a great deal about the cactus group.  The ideas in this paper stemmed from the wish to generalize the results in \cite{AzenhasTarighatTorres22}, which was written as part of the project titled \textit{The A, C, shifted Berenstein--Kirillov groups and cacti}, in the context of the above mentioned meeting. The author was supported by ICERM for this workshop, was also supported by the grant SONATA NCN  UMO-2021/43/D/ST1/02290
 and partially supported by the grant MAESTRO NCN UMO-2019/34/A/ST1/00263.

\section{The cactus group and crystals}
\label{setup}

Let $\Lambda$ be the integral weight lattice and $\Lambda^{+} \subset \Lambda$ be the \textit{dominant weights}. Recall that irreducible finite-dimensional representations of $\mathfrak{g}$ are in one-to-one correspondence with the set of highest weights $\Lambda^{+}$.  We now recall the definition of a semi-normal crystal as in \cite{BSch17}. 

\begin{definition} \label{seminormalcrystal}
A semi-normal $\mathfrak{g}$-crystal consists of a non-empty set $B$ together with maps 

\begin{align*}
\op{wt}:& B \longrightarrow \Lambda \\
e_{i},f_{i}:& B \longrightarrow B \sqcup \left\{ 0\right\}, i \in D
\end{align*}

\noindent such that for all $b,b' \in B$:

\begin{itemize}	
\item $b' = e_i(b)$ if and only if $b = f_i(b')$,
\item if $f_i(b) \neq 0 $ then $\textsf{wt}(f_i(b)) = \textsf{wt}(b)-\alpha_i$;\\
if $e_i(b) \neq 0$, then
$\textsf{wt}(e_i(b)) = \textsf{wt}(b)+\alpha_i$, and
\item  $\varphi_i(b)-\varepsilon_i(b)=  \langle \textsf{wt}(b),\alpha_i^\vee  \rangle$,
\end{itemize}

\noindent where 
\
\begin{align*}
\varepsilon_i(b)&=\max\{a \in \mathbb{Z}_{\geq 0} :e_i^a(b)\neq 0\} \hbox{ and } \\
       \varphi_i(b)&=\max\{a \in \mathbb{Z}_{\geq 0 }:f_i^a(b)\neq 0\}.
       \end{align*}

\noindent 
To each such crystal $B$ is associated a \textit{crystal graph}, a coloured directed graph with vertex set $B$ and edges coloured by elements $i \in D$, where if $f_{i}(b) = b'$ there is an arrow $b \overset{i}{\rightarrow} b'$. We say that a crystal is irreducible if its corresponding crystal graph is connected and finite. 
\end{definition}

The finite irreducible semi-normal $\mathfrak{g}$-crystals are labeled by the dominant weights $\Lambda^{+}$. Given a highest weight $\lambda \in \Lambda^{+}$, the corresponding irreducible crystal is usually denoted by $B(\lambda)$. It encodes important information about the corresponding irreducible finite dimensional representation of $\mathfrak{g}$, $V(\lambda)$. For instance, $\op{dim(V(\lambda))}$ equals the cardinality of $B$, and, in the weight decomposition $V(\lambda) = \underset{\mu \leq \lambda}{\oplus} V(\lambda)_{\mu}$, $\op{dim}(V(\lambda)_{\mu})$ equals the cardinality of the set of $b \in B(\lambda)$ such that $\op{wt}(b) = \mu$. Moreover, for a subinterval $J \subset D$, the crystal corresponding to the Levi restriction of $V(\lambda)$ corresponds to the $\mathfrak{g}_{J}$-crystal $B(\lambda)_{J}$ with crystal graph obtained from the graph for $B(\lambda)$ by deleting edges with labels $i \notin J$. In this paper, we will only deal with crystals whose crystal graphs decompose into connected components, each of which is isomorphic to crystals of the form $B(\lambda)$. These are also known in the literature as \textit{normal} crystals. \\

\subsubsection{Sch\"utzenberger--Lusztig involutions}
There is an elegant internal action of the cactus group $J_{\mathfrak{g}}$ on crystals via partial Sch\"utzenberger--Lusztig involutions, which are generalizations of Sch\"utzenberger--Lusztig involutions originally studied by Berenstein--Kirillov and generalized by Halacheva. For a subinterval $J \subset D$, the partial \break Sch\"utzenberger--Lusztig involution is defined as follows on $B(\lambda)$. Let $v \in B(\lambda)_{J}$ be a \textit{highest weight} element, and let $v_{w^{J}_{0}} \in B(\lambda)_{J}$ be a \textit{lowest weight} element. In particular $\op{wt}(v_{w^{J}_{0}}) = w^{J}_{0}(\op{wt}(v))$ Let $b = f_{i_{r}}\cdots f_{i_{1}}(v)$ for $i_{j} \in J, j \in [1,r]$. Then the partial Sch\"utzenberger--Lusztig involution is the unique involution $\xi_{J} : B(\lambda) \rightarrow B(\lambda)$ which satisfies for each $j \in J$: 
 \
 \
 \begin{align*}
 \xi_{J}(e_{j}(b)) &= f_{\theta_{J}(j)}(\xi_{J}(b))\\
 \xi_{J}(f_{j}(b)) &= e_{\theta_{J}(j)}(\xi_{J}(b))   \hbox{ and } \\
\op{wt}(\xi_{J}(b)) &= w^{J}_{0}(\op{wt}(b)).
\end{align*}
\
\noindent
In fact, $\xi_{J}(b) = e_{\theta_{J}(i_{r})}\cdots e_{\theta_{J}(i_{1})}(v)$. If $J = D$, $\xi_{J}$ is known as the Sch\"utzenberger--Lusztig involution, and denoted simply by $\xi$. Each partial Sch\"utzenberger--Lusztig involution acts as the corresponding Sch\"utzenberger--Lusztig involution applied to each connected component of the Levi-branched crystal $B(\lambda)_{J}$. If our normal crystal $B$ is not connected, partial Sch\"utzenberger--Lusztig involutions are defined in the same way as above, on each connected component. 

\begin{theorem}[Halacheva, \cite{Halacheva2020}]
\label{thm:halacheva}
Let $B$ be a normal $\mathfrak{g}$-crystal. The cactus group $J_{\mathfrak{g}}$ acts on $B$ via partial Sch\"utzenberger--Lusztig involutions, that is, for $J \subset D$ a subinterval, the assignment $s_{J} \mapsto \xi_{J}$ induces a group action. 
\end{theorem}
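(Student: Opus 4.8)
The plan is to show that the assignment $s_J \mapsto \xi_J$ respects the three defining relations of \Cref{def:cactus}; since $J_{\mathfrak g}$ is presented by exactly those relations, this is all that is required. Relation~1, $\xi_J^2 = \operatorname{id}$, is built into the construction: $\xi_J$ flips each connected component of $B(\lambda)_J$ upside down and twists edge labels by the involution $\theta_J$, so applying it twice restores every arrow and weight. Throughout I would rely on the \emph{uniqueness} half of the definition of a partial Sch\"utzenberger--Lusztig involution: on a single connected $J$-component the two intertwining identities together with $\operatorname{wt}(\xi_J b) = w_0^J\operatorname{wt}(b)$, and the fact that the map is an involution, pin $\xi_J$ down completely, because the component is generated from its highest weight element by the $f_j$, $j \in J$. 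Hence to prove an equality between composites of such involutions it suffices to check that one side satisfies the defining properties of the other.

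For Relation~2, suppose $I,J$ are connected with $I\cup J$ disconnected; then $I$ and $J$ are disjoint and no node of $I$ is adjacent to a node of $J$, so $\mathfrak g_{I\cup J}\cong \mathfrak g_I\times\mathfrak g_J$. Each connected $(I\cup J)$-component of $B$ is then an external tensor product $B_I\otimes B_J$ of an irreducible $\mathfrak g_I$-crystal and an irreducible $\mathfrak g_J$-crystal, on which $\xi_I$ acts as $\xi_{B_I}\otimes\operatorname{id}$ and $\xi_J$ as $\operatorname{id}\otimes\xi_{B_J}$ (the $I$-operators see only the first factor, the $J$-operators only the second). Maps acting on independent tensor factors commute, giving $\xi_I\xi_J=\xi_J\xi_I$.

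Relation~3 is the heart of the argument and where I expect the real work. Assume $J\subset I$ and set $K=\theta_I(J)$; I want $\xi_I\xi_J=\xi_K\xi_I$, equivalently $\Phi:=\xi_I\xi_J\xi_I=\xi_K$ (using $\xi_I^2=\operatorname{id}$). Since $J,K\subseteq I$, all three involutions preserve each connected $I$-component of $B$, so I may assume $B$ is $I$-irreducible and that $\xi_I$ is the full Sch\"utzenberger--Lusztig involution there. First, $\Phi$ is an involution, as $\Phi^2=\xi_I\xi_J\xi_I\xi_I\xi_J\xi_I=\operatorname{id}$. Next I would verify the intertwining relations: for $k\in K$ write $k=\theta_I(j)$ with $j\in J$, and use $\xi_I e_k=f_{\theta_I(k)}\xi_I=f_j\xi_I$, then $\xi_J f_j=e_{\theta_J(j)}\xi_J$, then $\xi_I e_{\theta_J(j)}=f_{\theta_I\theta_J(j)}\xi_I$, to obtain
\begin{align*}
\Phi(e_k b)=f_{\theta_I\theta_J(j)}(\Phi b),
\end{align*}
with the relation for $f_k$ following identically. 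Comparing with the desired $\Phi(e_k b)=f_{\theta_K(k)}(\Phi b)$ reduces everything to the single combinatorial identity $\theta_I\theta_J(j)=\theta_K(\theta_I(j))$, i.e. $\theta_K=\theta_I\theta_J\theta_I$ on the relevant nodes; the weight condition $\operatorname{wt}(\Phi b)=w_0^Iw_0^Jw_0^I\operatorname{wt}(b)$ reduces to the same thing via $w_0^Iw_0^Jw_0^I=w_0^K$.

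The main obstacle is therefore this conjugation identity, which I would establish directly in the Weyl group. Since $J\subset I$, the longest element $w_0^I$ sends each simple root $\alpha_j$ ($j\in J$) to $-\alpha_{\theta_I(j)}$ and hence the reflection $r_j$ to $r_{\theta_I(j)}$, so conjugation by $w_0^I$ carries the parabolic $W_J$ isomorphically, as a Coxeter system, onto $W_K$ and therefore its longest element to the longest element, $w_0^Iw_0^J w_0^I=w_0^K$. Feeding $\alpha_{\theta_K(k)}=-w_0^K\alpha_k$ through this relation then yields $\theta_K=\theta_I\theta_J\theta_I$. With the intertwining relations and the weight identity in hand, $\Phi$ satisfies exactly the defining properties of $\xi_K$ on each $K$-component, and the uniqueness noted above gives $\Phi=\xi_K$, completing Relation~3 and hence the proof that $s_J\mapsto\xi_J$ defines an action of $J_{\mathfrak g}$.
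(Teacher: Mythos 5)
The paper itself contains no proof of \Cref{thm:halacheva}: it is imported verbatim from Halacheva \cite{Halacheva2020}, so there is no in-paper argument to compare yours against. On its own merits, your proof is correct and is, in substance, the standard argument (essentially Halacheva's): Relation 1 is definitional, Relation 2 follows from the external tensor decomposition of connected $(I\cup J)$-components when $I\cup J$ is disconnected, and Relation 3 is reduced, via the uniqueness characterization of the partial involutions, to the Weyl-group identity $w_0^Iw_0^Jw_0^I=w_0^{\theta_I(J)}$ and its consequence $\theta_{\theta_I(J)}=\theta_I\theta_J\theta_I$, both of which you establish correctly by conjugating the parabolic $W_J$ by $w_0^I$ (using $w r_\alpha w^{-1}=r_{w\alpha}$ and $w_0^I\alpha_j=-\alpha_{\theta_I(j)}$). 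The intertwining computation $\Phi e_k = f_{\theta_I\theta_J\theta_I(k)}\Phi$ for $\Phi=\xi_I\xi_J\xi_I$ is also correct.

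One point deserves more care. Your appeal to uniqueness tacitly requires that $\Phi$ map each connected $K$-component of $B$ (where $K=\theta_I(J)$) to \emph{itself}: an involution satisfying the intertwining and weight conditions could in principle permute isomorphic $K$-components, and the partial involutions are really defined component-wise, so uniqueness only pins down a map once component-preservation is known. This is easy to supply: the intertwining relations for $\xi_I$ show that $\xi_I$ carries $K$-components bijectively onto $J$-components (since $\theta_I(K)=J$), $\xi_J$ preserves $J$-components, and $\xi_I^2=\operatorname{id}$, whence $\Phi(C)=C$ for every $K$-component $C$; on each such irreducible component the highest weight element must be sent to the unique element of extremal weight $w_0^K\nu$, after which the generation argument you give does determine the map. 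With this observation inserted, your proof is complete; it also quietly corrects a typo in the paper's own formula, which writes $\xi_J(b)=e_{\theta_J(i_r)}\cdots e_{\theta_J(i_1)}(v)$ where the lowest weight element $v_{w_0^J}$, not $v$, should appear.
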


\section{The virtual cactus group}
Let $X \hookrightarrow Y$ be an embedding of a twisted Dynkin diagram $X$ into a simply-laced Dynkin diagram $Y$ given by folding. More precisely, there is a Dynkin diagram automorphism $\operatorname{aut}: Y \rightarrow Y$ of $Y$ such that there is an edge-preserving bijection $\sigma: X \rightarrow Y/\operatorname{aut}$. The injection of Dynkin diagrams is reflected on the Lie algebras as follows. Let $\mathfrak{g}_{X}$, respectively $\mathfrak{g}_{Y}$ be the complex simple Lie algebras with Dynkin diagram $X$, respectively $Y$. Then the Dynkin diagram automorphism $\op{aut}$ induces a Lie algebra automorphism $\op{aut}: \mathfrak{g}_{Y} \rightarrow \mathfrak{g}_{Y}$. The set of fixed points under this automorphism has the structure of a Lie algebra isomorphic to  $\mathfrak{g}_{X}$ \cite{kac}.  This induces an injection $\mathfrak{g}_{X} \hookrightarrow \mathfrak{g}_{Y}$. Below we list all such pairs, together with the values of $\theta_{X}$ and $\theta_{Y}$. We use the numbering of the vertices given by \cite{BSch17}.

$$
\begin{array}{cccc} 
      \textbf{X} & \textbf{Y} & \textbf{$\theta_{X}$} & \textbf{$\theta_{Y}$}\\

\hline
  		      \textbf{$C_{n}$} & \textbf{$A_{2n-1}$} &  \operatorname{Id}  &   \textbf{$\theta_{Y}(i) = 2n-i$}\\       

\hline 
&&&\\
         \textbf{$B_{2n-1}$} & \textbf{$D_{2n}$} &    \operatorname{Id} & \operatorname{Id}  \\
\hline
         \textbf{$B_{2n}$} & \textbf{$D_{2n+1}$} &  \operatorname{Id} & \theta_{Y}(i) = \begin{cases*} i & if $i < 2n$    \\  2n, 2n+1 & if $i = 2n+1, 2n$ resp. \end{cases*}   \\
\hline
      \textbf{$G_{2}$} & \textbf{$D_{4}$}  &   \operatorname{Id}    & \operatorname{Id} \\
\hline
&&&\\
         \textbf{$F_{4}$} & \textbf{$E_{6}$} &  \operatorname{Id}  &    \theta_{Y}(i)  = \begin{cases*} 6,1 & if $i = 1,6$ resp.   \\  5,3 & if $i = 3,5$ resp.   \\  i & otherwise \end{cases*}  
  \end{array}
  $$

We have $\operatorname{aut} = \theta_{Y}$, except for the cases where $Y= D_{2n}$, where

\[
 \operatorname{aut}(i) =
\begin{cases}
i  & i < 2n-1 \\
2n & i = 2n-1 \\
2n - 1 & i = 2n.
\end{cases}
\]

 We proceed to define a group monomorphism $J_{X}\hookrightarrow J_{Y}$. Its image will be isomorphic to what we call the virtual cactus group, generalizing the concept of the virtual symplectic cactus group defined in \cite{AzenhasTarighatTorres22} for $X = C_{n}$ and $Y = A_{2n-1}$. We start by stating the following lemma, which immediately follows from the description in the previous section. We will abuse notation and consider the coset $\sigma(I) \in Y/\operatorname{aut}$, as a subset of $Y$, for $I \subset X$. Each non-simply laced Dynkin diagram we consider has what we will call in this note a \textit{branching point} $x_{0} \in X$, described in the table below. 

$$
\begin{array}{cccc} 
      \textbf{X} & x_{0}\\
\hline
  		      \textbf{$C_{n}$} & n \\   
\hline
         \textbf{$F_{4}$} & 2\\    

\hline
         \textbf{$B_{n}$} & n-1 \\
\hline
      \textbf{$G_{2}$} & 2 
  \end{array}
  $$

For the comfort of the reader we include the corresponding Dynkin diagrams as well below. 
$$
\begin{array}{cccc} 
      \textbf{X} & \textbf{Y}\\
\hline
\scriptsize C_n \qquad  \dynkin[edge length=1cm,labels*={1,n-1,n}] C{*.**} & \scriptsize A_{2n-1} \qquad  \dynkin[edge length=1cm,labels*={1,n,2n-1}] A{*.*.*} \\       

\hline
\scriptsize F_{4} \qquad  \dynkin[edge length=1cm,labels*={1,2,3,4}] F{****} & \scriptsize E_{6} \qquad  \dynkin[edge length=1cm,labels*={1,2,3,4,5,6}] E{******}\\

\hline
\scriptsize B_n \qquad  \dynkin[edge length=1cm,labels*={1,n-1,n}] B{*.**}  & \scriptsize D_{n+1} \qquad  \dynkin[edge length=1cm,labels*={1,n-1,n,n+1}] D{*.***}  \\
\hline
\scriptsize G_{2} \qquad  \dynkin[edge length=1cm,labels*={1,2}] G{**}& \scriptsize D_{4} \qquad  \dynkin[edge length=1cm,labels*={1,2,3,4}] D{****} 
  \end{array}
  $$

We now consider the following elements:

\begin{align*}
\tilde{s}_{I} = \prod s^{Y}_{\tilde{I}} 
\end{align*}

\noindent
where $s^{Y}_{\tilde{I}}$ are the generators of the cactus group $J_{Y}$ and the product is taken over the connected components $\tilde{I}$ of $\sigma(I)$. Our aim for the rest of this section is to prove the following result.

\begin{theorem}
\label{thm:virtualcacti}
The map defined by 
\begin{align*}
\Phi: J_{X} &\rightarrow J_{Y} \\
s_{I} &\mapsto \tilde{s}_{I}
\end{align*}

\noindent is a monomorphism of groups. 
\end{theorem}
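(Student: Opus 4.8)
The plan is to prove \Cref{thm:virtualcacti} in two stages: first that $\Phi$ is a well-defined group homomorphism, i.e.\ that the elements $\tilde{s}_I$ satisfy the three families of relations of \Cref{def:cactus}, and second that $\Phi$ is injective. Throughout I write $\tilde{s}_I=\prod_a s^{Y}_{I_a}$, the product over the connected components $I_a$ of $\sigma(I)\subseteq Y$. The basic observation, used repeatedly, is that two distinct components $I_a,I_{a'}$ of a single $\sigma(I)$ are disjoint and non-adjacent, so $I_a\cup I_{a'}$ is disconnected and hence $s^{Y}_{I_a}$ and $s^{Y}_{I_{a'}}$ commute by relation~(2) in $J_{Y}$; in particular the product defining $\tilde{s}_I$ is unambiguous.

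Relations (1) and (2) I would dispatch first, as they are essentially formal. For (1), since the $s^{Y}_{I_a}$ pairwise commute and each is an involution, $\tilde{s}_I^{2}=\prod_a (s^{Y}_{I_a})^{2}=1$. For (2), suppose $I\cup J$ is disconnected in $X$; then $I\cap J=\varnothing$ and no node of $I$ is adjacent to a node of $J$. Because $\sigma$ is an edge-preserving bijection onto $Y/\operatorname{aut}$ and each $\sigma(x)$ is a full $\operatorname{aut}$-orbit, it follows that $\sigma(I)$ and $\sigma(J)$ are disjoint and that no component $I_a$ is adjacent in $Y$ to any component $J_b$. Hence every $s^{Y}_{I_a}$ commutes with every $s^{Y}_{J_b}$ by relation~(2) of $J_{Y}$, so $\tilde{s}_I$ and $\tilde{s}_J$ commute. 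The one point to check carefully is that non-adjacency in the quotient $Y/\operatorname{aut}$ lifts to non-adjacency of the orbits in $Y$, which I would read off directly from the folding tables.

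The crux is relation~(3): for $J\subset I$ connected in $X$ I must show $\tilde{s}_I\tilde{s}_J\tilde{s}_I=\tilde{s}_{\theta_I(J)}$ (using $\tilde{s}_I^{2}=1$). The plan is to reduce this to a single combinatorial \emph{intertwining lemma}: under $\sigma$, the Dynkin involution $\theta_I$ of the sub-diagram $I$ corresponds to the family of component-wise Dynkin involutions $\theta_{I_a}$ on the components of $\sigma(I)$, up to the permutation of components induced by $\operatorname{aut}$. Concretely, each component $J_b$ of $\sigma(J)$ lies in a unique component $I_{a(b)}$ of $\sigma(I)$, and $\theta_{I_{a(b)}}(J_b)$ is a component of $\sigma(\theta_I(J))$; as $b$ varies these exhaust the components of $\sigma(\theta_I(J))$. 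Granting this, I insert $\tilde{s}_I\tilde{s}_I=1$ to write $\tilde{s}_I\tilde{s}_J\tilde{s}_I=\prod_b(\tilde{s}_I\,s^{Y}_{J_b}\,\tilde{s}_I)$; in each factor only the containing generator $s^{Y}_{I_{a(b)}}$ fails to commute with $s^{Y}_{J_b}$ (relation~(2) for the others), so relation~(3) of $J_{Y}$ gives $\tilde{s}_I\,s^{Y}_{J_b}\,\tilde{s}_I=s^{Y}_{\theta_{I_{a(b)}}(J_b)}$, and the intertwining lemma collects these into $\tilde{s}_{\theta_I(J)}$. The hard part will be establishing the intertwining lemma, which I would verify case by case along the five foldings. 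Here the folding geometry enters: whether $\sigma(I)$ is connected or splits into $\operatorname{aut}$-conjugate components depends on the type and on the position of $I$ relative to the branching point $x_0$ (for $C_n\hookrightarrow A_{2n-1}$, $\sigma(I)$ is connected exactly when $n\in I$, whereas for $B_n\hookrightarrow D_{n+1}$ it is always connected). I would take $C_n\hookrightarrow A_{2n-1}$ as the prototype, where $\sigma(I)=[p,2n-p]$ is a symmetric interval whose central flip $j\mapsto 2n-j$ simultaneously realizes $\theta_I=\operatorname{Id}$ (via the swap of the two components of $\sigma(J)$ when $n\notin J$) and the genuine type-$A$ flip (when $n\notin I$), and I would pay special attention to $F_4\hookrightarrow E_6$ and $G_2\hookrightarrow D_4$, where $\operatorname{aut}$ acts nontrivially on several nodes, $\theta_Y$ may itself be nontrivial, and $\sigma(I)$ can have more than two components.

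For injectivity, I would first note that $\Phi(J_X)$ lies in the $\operatorname{aut}$-fixed subgroup of $J_{Y}$: since $\sigma(I)$ is a union of $\operatorname{aut}$-orbits, $\operatorname{aut}$ permutes the components $I_a$, so the commuting product $\tilde{s}_I$ is fixed by the automorphism $s^{Y}_{K}\mapsto s^{Y}_{\operatorname{aut}(K)}$ of $J_{Y}$. To conclude that $\Phi$ is a monomorphism, the plan is to construct a left inverse, i.e.\ a retraction defined on the subgroup generated by the $\tilde{s}_I$ that returns each $\tilde{s}_I$ to $s_I$; this amounts to recognizing which connected sub-diagrams of $Y$ occur as components of some $\sigma(I)$ and checking that the relations of $J_X$ are the only ones forced, generalizing the argument given for $C_n\hookrightarrow A_{2n-1}$ in \cite{AzenhasTarighatTorres22}. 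Since the relevant computations coincide with those already carried out for relation~(3), I expect the bulk of the work to be contained in the intertwining lemma, with the remaining injectivity step being routine bookkeeping of the component combinatorics once that lemma is in hand.
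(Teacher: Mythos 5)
Your proposal follows essentially the same route as the paper: relations (1) and (2) are dispatched by the identical commutation arguments, and your ``intertwining lemma'' is precisely the identity $\sigma(\theta_{I}(J)) = \theta_{\tilde{I}_{1}}(\tilde{J}_{1})\sqcup \theta_{\tilde{I}_{2}}(\tilde{J}_{2})$ that the paper's \Cref{lem:thirdvirtualrelation} establishes by the same kind of case analysis along the foldings (organized there by whether $\theta_{Y}=\operatorname{Id}$ or $\theta_{Y}=\operatorname{aut}$ and whether $I$ contains the branching point). For injectivity the paper does exactly what you plan---it introduces the presentation $J^{v}_{X}$ of \Cref{def:virtualcactusgroup}, clearly isomorphic to $J_{X}$, and identifies the image $\Phi(J_{X})$ with it via $\tilde{s}_{I}\mapsto s_{\sigma(I)}$---so your sketch matches the paper's argument, including the fact that the step you defer as ``routine bookkeeping'' (that no relations among the $\tilde{s}_{I}$ inside $J_{Y}$ beyond those induced by the cactus relations are forced) is also the point the paper asserts rather than argues in detail.
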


\begin{lemma}
\label{lem:thirdvirtualrelation}
Let $I,J \subset X$ such that $J \subset I$. Then 
\begin{align*}
\tilde{s}_{I}\tilde{s}_{J} = \tilde{s}_{\theta_{I}(J)}\tilde{s}_{I}
\end{align*}
\end{lemma}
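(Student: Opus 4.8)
The plan is to prove the identity entirely inside $J_Y$ by expanding $\tilde s_I$, $\tilde s_J$, and $\tilde s_{\theta_I(J)}$ into products of generators $s^Y_\bullet$ indexed by the connected components of $\sigma(I)$, $\sigma(J)$, and $\sigma(\theta_I(J))$, and then transporting $\tilde s_J$ across $\tilde s_I$ using only the commutation relation and the third relation of \Cref{def:cactus}. Write $\sigma(I)=\bigsqcup_s \tilde I_s$ and $\sigma(J)=\bigsqcup_t \tilde J_t$ for the decompositions into connected components. Since $J\subseteq I$ we have $\sigma(J)\subseteq\sigma(I)$, and as each $\tilde J_t$ is connected it lies in a unique component $\tilde I_{s(t)}$ of $\sigma(I)$; this pairing is the combinatorial backbone of the argument.

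First I would record the purely formal computation. Grouping each factor $s^Y_{\tilde J_t}$ with the factor $s^Y_{\tilde I_{s(t)}}$ containing it is legitimate, because for $s'\neq s(t)$ the union $\tilde I_{s'}\cup\tilde J_t$ is disconnected, so the commutation relation lets these pass each other. Within each group the third relation applies repeatedly, as $\tilde J_t\subset\tilde I_{s(t)}$, giving
\begin{align*}
s^Y_{\tilde I_s}\prod_{t:\,s(t)=s}s^Y_{\tilde J_t}=\Big(\prod_{t:\,s(t)=s}s^Y_{\theta_{\tilde I_s}(\tilde J_t)}\Big)\,s^Y_{\tilde I_s}.
\end{align*}
Because $\theta_{\tilde I_s}$ is a diagram automorphism of $\tilde I_s$ it preserves disconnectedness, so each new factor $s^Y_{\theta_{\tilde I_s}(\tilde J_t)}\subseteq\tilde I_s$ commutes with every $s^Y_{\tilde I_{s'}}$ for $s'\neq s$ and with the analogous factors from the other groups; regrouping then yields
\begin{align*}
\tilde s_I\tilde s_J=\Big(\prod_t s^Y_{\theta_{\tilde I_{s(t)}}(\tilde J_t)}\Big)\Big(\prod_s s^Y_{\tilde I_s}\Big)=\Big(\prod_t s^Y_{\theta_{\tilde I_{s(t)}}(\tilde J_t)}\Big)\,\tilde s_I.
\end{align*}

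The lemma thus reduces to a single combinatorial compatibility: the collection $\{\theta_{\tilde I_{s(t)}}(\tilde J_t)\}_t$ must be precisely the set of connected components of $\sigma(\theta_I(J))$, so that the leftmost product equals $\tilde s_{\theta_I(J)}$. Equivalently, $\sigma\circ\theta_I$ agrees with the operation that applies the component-wise involutions $\theta_{\tilde I_s}$ to $\sigma(I)$. I expect this to be the crux. The conceptual reason is that folding identifies $w_0^I$ with the $\operatorname{aut}$-invariant longest element $w_0^{\sigma(I)}=\prod_s w_0^{\tilde I_s}$ of $W_Y^{\sigma(I)}$; since $\sigma(I)$ is a union of $\operatorname{aut}$-orbits, $-w_0^{\sigma(I)}$ commutes with $\operatorname{aut}$, hence permutes $\operatorname{aut}$-orbits and descends to $-w_0^I=\theta_I$ on the folded system.

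The main obstacle is that $\theta_{\tilde I_s}$ need \emph{not} coincide with $\operatorname{aut}|_{\tilde I_s}$, so the descent must be verified against the explicit data of the two tables rather than merely asserted: in the triality fold $G_2\hookrightarrow D_4$ one has $\theta_{D_4}=\operatorname{Id}$ while $\operatorname{aut}$ has order three, and in the folds with $Y=D_m$ of odd rank the involution $\theta_{\tilde I_s}$ swaps the two fork nodes although $\theta_X=\operatorname{Id}$. In every such case the resolution is uniform: $\sigma(J)$ is a union of $\operatorname{aut}$-orbits, so it either contains a full orbit or is disjoint from it, whence the component-wise operation sends components of $\sigma(J)$ to components of $\sigma(\theta_I(J))$ exactly as predicted. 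I would therefore organize the final verification according to whether $\sigma(I)$ is connected (one component $\tilde I$ containing all $\tilde J_t$, where the third relation applies directly) or disconnected (where $\operatorname{aut}$ permutes the $\tilde I_s$ and pairs the $\tilde J_t$ accordingly), using the branching point $x_0$ to detect which alternative occurs.
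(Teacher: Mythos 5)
Your proposal is correct, and its mechanics coincide with the paper's: both pass $\tilde s_J$ across $\tilde s_I$ using relations \emph{2.} and \emph{3.} of \Cref{def:cactus} component by component, and both reduce the lemma to the compatibility $\sigma(\theta_I(J))=\bigsqcup_t \theta_{\tilde I_{s(t)}}(\tilde J_t)$. The difference is in organization and in how that compatibility is justified. The paper branches into cases at the outset ($\theta_Y=\operatorname{Id}$ versus $\theta_Y=\operatorname{aut}$; $x_0\in I$ or not; $\sigma(I)$ connected or not) and repeats the group-theoretic manipulation inside each case; the key identity surfaces explicitly only in the two-component case, as equation (\ref{localclaim1}), where it is settled ``by case-by-case analysis,'' while in the remaining cases it trivializes (either $\theta_{\tilde I}(\tilde J)=\tilde J$, or $\sigma|_I$ is a diagram isomorphism, or $J$ is a single vertex). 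You instead perform the reduction once, uniformly, and isolate the combinatorial identity as the single remaining point, which makes transparent exactly what must be verified and covers all cases (including several $\tilde J_t$ inside one $\tilde I_s$) without repetition. Your conceptual justification of the identity --- folding identifies $w_0^{X,I}$ with the $\operatorname{aut}$-invariant longest element $w_0^{Y,\sigma(I)}=\prod_s w_0^{Y,\tilde I_s}$, so the induced diagram involution of $\sigma(I)$ commutes with $\operatorname{aut}$, permutes orbits, and descends to $\theta_I$ --- does not appear in the paper and is sound; it buys an argument that is independent of inspecting the tables and would apply verbatim to any folding. Its cost is the reliance on the standard but here-unproven fact that the Weyl group embedding given by folding carries longest elements of parabolics to longest elements of parabolics; since you hedge that step and fall back on the same table-checking the paper itself uses for (\ref{localclaim1}), the two proofs end up comparable in rigor, with yours the more uniform and reusable formulation.
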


\begin{proof}
First assume that $\theta_{Y} = \op{Id}$. This means $Y = D_{2n}$ for some $n \geq 2$. If $I = X$ then $\sigma(I) = Y$, therefore the statement of \Cref{lem:thirdvirtualrelation} follows from $\theta_{Y} = \op{Id}$ and the defining Relation 3 for the cactus group $J_{Y}$. If $I \subset X$ does not contain the branching point $x_{0}$ then $\sigma |_I:I \rightarrow \tilde{I} = \sigma(I)$ is an isomorphism, hence the statement follows trivially. If $I$ is not $X$ but contains the branching point, then either $I$ is of type A, $\sigma(I) = \tilde{I}$ is of type A and $\sigma|_{I} : I \rightarrow \tilde{I}$ is an isomorphism, which implies the claim as in the previous case, or $I$ is of type $G_{2}$, in which case the claim also follows easily since $J$ is forced to consist of just one vertex.\\

Assume next that $\theta_{Y} = \operatorname{aut}$. If $I \subset X$ contains the branching point $x_{0}$, then $\theta_{I} = \operatorname{Id}_{I}$ and $\sigma(I) = \tilde{I}$ is connected.  Let us then assume first that $x_{0} \in I$. Now, if $x_{0} \in J$ also, then $\sigma(J) = \tilde{J}$ is connected and $\theta_{\tilde{I}}(\tilde{J}) = \tilde{J}$. Now, if $J \subset I$ does not contain a branching point but $I$ does, then either 

\begin{itemize}
\item $\sigma (J) = \tilde{J}_{1} \sqcup \tilde{J}_{2}$ has two isomorphic connected components, in which case $\theta_{\tilde{I}}(\tilde{J}_{1}) = \tilde{J}_{2}$ and $\theta_{\tilde{I}}(\tilde{J}_{2})) = \tilde{J}_{1}$, or
\item $\sigma(J) = \tilde{J}$ is connected and isomorphic to $J$, in which case $\theta_{\tilde{I}}(\tilde{J}) = \tilde{J}$. 
\end{itemize}

We conclude then that if $x_{0} \in I$ and $\sigma(J) = \tilde{J}$ is connected, then 

\begin{align*}
\tilde{s}_{I} \tilde{s}_{J} = s^{Y}_{\tilde{I}}s^{Y}_{\tilde{J}} = s^{Y}_{\theta_{\tilde{I}}(\tilde{J})}s^{Y}_{\tilde{I}}= s^{Y}_{\tilde{J}}s^{Y}_{\tilde{I}} = \tilde{s}_{J} \tilde{s}_{I} = \tilde{s}_{\theta_{I}(J)} \tilde{s}_{I}, \end{align*}

\noindent as desired. Now, if $x_{0} \in I$ and $\sigma_{J} = \tilde{J}_{1}\sqcup \tilde{J}_{2}$, then we still have $\theta_{I} = \operatorname{Id}$, so $\theta_{I}(J) = J$. We have in this case

\begin{align*}
\tilde{s}_{I} \tilde{s}_{J} = s^{Y}_{\tilde{I}}s^{Y}_{\tilde{J}_{1}}s^{Y}_{\tilde{J}_{2}} = s^{Y}_{\theta_{\tilde{I}}(\tilde{J}_{1})}s^{Y}_{\tilde{I}} s^{Y}_{\tilde{J}_{2}}= s^{Y}_{\theta_{\tilde{I}}(\tilde{J}_{1})} s^{Y}_{\tilde{I} (\tilde{J}_{2})} s^{Y}_{{\tilde{I}}} = \tilde{s}_{J} \tilde{s}_{I} = \tilde{s}_{\theta_{I}(J)} \tilde{s}_{I}.
\end{align*}

\noindent This concludes the proof in the case $x_{0} \in I$.\\

Now let us assume that $x_{0} \notin I$. We have two cases: The case where $\sigma(I)$ is connected is trivial because since $\theta_{Y} = \operatorname{aut}$, we conclude that necessarily $\theta_{\sigma(I)} = \operatorname{aut}|_{\sigma(I)} = \operatorname{Id}_{\sigma(I)}$, also $\sigma(J)\subset \sigma(I)$ is connected for each $J \subset I$, and $\tilde{s}_{J} = s^{Y}_{\sigma(J)}$ for each $J \subset I$. It remains to consider the case where $\sigma(I)$ has two connected components $\sigma(I) = \tilde{I}_{1} \sqcup \tilde{I}_{2}$. It follows that for each $J\subset I$ we have a decomposition into connected components $\sigma(J) = \tilde{J}_{1} \sqcup \tilde{J}_{2}$, where $\tilde{J}_{i} \subset \tilde{I}_{i}, i = 1,2$. The following identity holds by case-by-case analysis:

\begin{align}
\label{localclaim1}
\sigma(\theta_{I}(J)) = \theta_{\tilde{I}_{1}}(\tilde{J}_{1})\sqcup \theta_{\tilde{I}_{2}}(\tilde{J}_{2}).
\end{align}

\noindent Therefore we have in this case: 

\begin{align*}
\tilde{s}_{I}\tilde{s}_{J}  &= s^{Y}_{\tilde{I}_{1}} s^{Y}_{\tilde{I}_{2}} s^{Y}_{\tilde{J}_{1}} s^{Y}_{\tilde{J}_{2}} \\
                                     &= s^{Y}_{\tilde{I}_{1}} s^{Y}_{\tilde{J}_{1}} s^{Y}_{\tilde{I}_{2}} s^{Y}_{\tilde{J}_{2}} \\
                                     & = s^{Y}_{\theta_{\tilde{I}_{1}}(\tilde{J}_{1})}s^{Y}_{\tilde{I}_{1}} s^{Y}_{\theta_{\tilde{I}_{2}}(\tilde{J}_{2})} s^{Y}_{\tilde{I}_{2}} \\
                                     & = s^{Y}_{\theta_{\tilde{I}_{1}}(\tilde{J}_{1})}s^{Y}_{\theta_{\tilde{I}_{2}}(\tilde{J}_{2})}s^{Y}_{\tilde{I}_{1}} s^{Y}_{\tilde{I}_{2}}\\
                                     & = \tilde{s}_{\theta_{I}(J)} \tilde{s}_{I},
 \end{align*}
 
 \noindent where the last equality follows from (\ref{localclaim1}). This concludes the proof in the cases where $\theta_{Y} = \operatorname{aut}$ and therefore the whole proof. 

\end{proof}

\begin{definition}
\label{def:virtualcactusgroup}
The virtual cactus group $J^{v}_{X}$ is defined by generators $s_{\sigma(I)}$, for each $I \subset X$ connected subdiagram, and by the relations: 
\begin{itemize}
\item[1.] $s^{2}_{\sigma(I)} = 1$;
\item[2.] $s_{\sigma(I)}s_{\sigma(J)} = s_{\sigma(J)}s_{\sigma(I)}$  if the union $J \cup I$ is disconnected;
\item[3.] $s_{\sigma(I)}s_{\sigma(J)} = s_{\sigma(\theta_{I}(J))}s_{\sigma(I)}$ if $J \subset I$. 
\end{itemize}
\end{definition}

It is clear from the definition that the virtual cactus group $J^{v}_{X}$ is isomorphic to the cactus group $J_{X}$.

\begin{proof}[Proof of \Cref{thm:virtualcacti}]
To show that $\Phi$ is a group morphism, we need to show three relations: 
\begin{enumerate}
\item $\tilde{s}_{I}^2 = Id$,
\item $\tilde{s}_{I} \tilde{s}_{J} = \tilde{s}_{J} \tilde{s}_{I} $,
\item $\tilde{s}_{I}\tilde{s}_{J} = \tilde{s}_{\theta_{I}(J)}\tilde{s}_{I}$.
\end{enumerate}

Note that the third relation has already been established in \Cref{lem:thirdvirtualrelation}. To prove $(1)$, note that since the connected components of $\sigma(I)$ are disjoint, the commutation relation \emph{2.} in \Cref{def:cactus} implies
\begin{align*}
{\tilde{s}_{I}}^2 = \prod {s^{Y}_{\tilde{I}}}^2 = Id
\end{align*}

\noindent To show the second relation, let $I,J \subset X$ be two disjoint, connected intervals. Then necessarily $\sigma(I)$ and $\sigma(J)$ are mutually disjoint. We have then 

\begin{align*}
\tilde{s}_{I}\tilde{s}_{J} = \prod s^{Y}_{\tilde{I}} \prod s^{Y}_{\tilde{J}}  =  \prod s^{Y}_{\tilde{J}} \prod s^{Y}_{\tilde{I}} 
\end{align*}

\noindent where the third equality follows from relation \emph{2.} for $J_Y$. Note that the image $\Phi(J_X)$ is a group isomorphic to the virtual cactus group $\tilde{J}_{X}$ via the isomorphism $\tilde{s}_{I} \mapsto  s_{\sigma(I)}$. Note that this map is well defined because $\sigma(I) = \sigma(J) \iff I = J$. 
\end{proof}

\section{Virtualization of the action of the cactus group on crystals of Littelmann paths}

In this section we will borrow most of our notation from \cite{PS18} for practical purposes as well as for the comfort of the reader. Let $ \lambda \in \Lambda^{+}.$ We consider $\mathcal{P}(\lambda)$ to be the Littelmann path model for $\lambda$ with paths $\pi: [0,1] \rightarrow \Lambda_{\mathbb{R}}$ of the form 
\
\[\pi(t) = \sum_{i \in D} H_{i,\pi}(t) \Lambda_{i},\]
\noindent
where $H_{i,\pi}(t) = \langle t, \alpha^{\vee}_{i}\rangle$ and where $\Lambda_{i} \in \Lambda^{+}$ are the fundamental weights for $i \in D$. The set $\mathcal{P}(\lambda)$ has the structure of a crystal isomorphic to $B(\lambda)$ with weight map $\op{wt}(\pi) = \pi(1)$. We refer the reader to \cite{PS18} for the definition of the crystal structure using the notation we use in this section. The original and standard reference of the topic is the paper \cite{pathsandrootoperators} by Littelmann. \\

Recall that in this paper we consider embeddings $X \hookrightarrow Y$ given by folding. Let $\Lambda_{X}$ and $\Lambda_{Y}$ be the corresponding integral weight lattices. The bijection $\sigma: X \rightarrow Y/ \op{aut}$ induces a map
\
\[\Psi: \Lambda_{X} \rightarrow \Lambda_{Y}\]
\noindent
given by the assignment 
\
\[\Lambda^{X}_{i} \mapsto \sum_{j \in \sigma(i)} \gamma_{i} (\Lambda^{Y})_{j}, \]

\noindent
where $\gamma_{i}$ is given by Table 5.1 in \cite{BSch17}  and where $\Lambda^{X}_{i}$ and $\Lambda^{Y}_{j}$ denote the fundamental weights in $\Lambda_{X}$, respectively $\Lambda_{Y}$.

\begin{definition}
\label{def:virtuality}
Let $\tilde{B}$ be a normal $\mathfrak{g}_{Y}$-crystal, and a subset $V \subset \tilde{B}$. The \textit{virtual root operators} of type $X$ are, for $i \in X$: 
\begin{align}
\label{virtualoperators}
e^{v}_{i} = \underset{j \in \sigma(i)}{\prod} \tilde{e}^{\gamma_{i}}_{j} \\
f^{v}_{i} = \underset{j \in \sigma(i)}{\prod} \tilde{f}^{\gamma_{i}}_{j}, 
\end{align}

\noindent where $\tilde{e}_{i}, \tilde{f}_{i}, i \in Y$ are the root operators for the $\mathfrak{g}_{Y}$-crystal $\tilde{B}$. 

\noindent A \textit{virtual crystal} is a pair $(V, \tilde{B})$ such that $V$ has a $\mathfrak{g}_{X}$-crystal structure defined by 
\begin{align}
e_{i} := e^{v}_{i} & f_{i} := f^{v}_{i} \\
\varepsilon_{i}:= \gamma^{-1}_{i}\tilde{\varepsilon}_{j} & \varphi_{i}:= \gamma^{-1}_{i}\tilde{\varphi}_{j},
\end{align}

\noindent where $\tilde{\varepsilon}_{j}, \tilde{\varphi}_{j} j \in Y$ denote the maps given by

\begin{align*}
\tilde{\varepsilon}_i(b)&=\max\{a \in \mathbb{Z}_{\geq 0} :\tilde{e}_i^a(b)\neq 0\} \hbox{ and } \\
       \tilde{\varphi}_i(b)&=\max\{a \in \mathbb{Z}_{\geq 0 }:\tilde{f}_i^a(b)\neq 0\}.
       \end{align*}

 If $\mathfrak{g}_{X}$-crystal $B$ is crystal isomorphic to a virtual crystal $V \subset \tilde{B}$ via an isomorphism $\phi: B \rightarrow V$, then the isomorphism $\phi$ is called a \textit{virtualization} map.
\end{definition}

For $\lambda \in \Lambda_{X}^{+}$, the weight $\psi(\lambda) \in \lambda_{Y}$, is dominant, that is, $\psi(\lambda) \in \Lambda^{+}_{Y}$. Given $\pi \in \mathcal{P}(\lambda)$, consider the path $\Psi(\pi):[0,1] \rightarrow \Lambda_{Y}$ defined by 
\
\begin{align}
\label{panscrimshawmap}
\Psi(\pi)(t) = \sum_{i \in D} H_{i,\pi}(t) \psi(\Lambda_{i}) 
\end{align}

\noindent
One of the main results in \cite{PS18} is the following theorem.

\begin{theorem}[Pan--Scrimshaw, \cite{PS18}]
\label{thm:pathvirtualization}
The assignment $\pi \mapsto \Psi(\pi)$ induces a virtualization map
\begin{align*}
\mathcal{P}(\lambda) & \rightarrow \mathcal{P}(\psi(\lambda)) \\
\pi & \mapsto \Psi(\pi). 
\end{align*}
\end{theorem}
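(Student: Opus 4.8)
The plan is to verify the three defining properties of a virtualization map directly from the height-function description of the Littelmann path crystal, writing $\psi$ for the induced linear map on weight lattices (the map appearing as $\psi(\Lambda_i)$ and $\psi(\lambda)$ in the displayed formulas) and $\Psi$ for the path map. Recall that the root operators $e_i,f_i$ on $\mathcal{P}(\lambda)$ depend only on the single real function $H_{i,\pi}(t)=\langle \pi(t),\alpha_i^\vee\rangle$: one locates the minimum of $H_{i,\pi}$ and reflects an appropriate initial (resp.\ final) segment, shifting the endpoint by $+\alpha_i$ (resp.\ $-\alpha_i$). Accordingly, the whole statement follows once one understands how $\Psi$ transforms these height functions. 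I will then check, in order: that $\Psi$ lands in $\mathcal{P}(\psi(\lambda))$ with the correct weight; that $\Psi$ intertwines $f_i,e_i$ on the $X$-side with the virtual operators $f_i^v,e_i^v$ on the $Y$-side; and that $\Psi$ is injective, so that it is a crystal isomorphism onto its image $V=\Psi(\mathcal{P}(\lambda))$.

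The computational core is the identity, for $j\in\sigma(i)$,
\[
H_{j,\Psi(\pi)}(t)=\langle \Psi(\pi)(t),\alpha_j^\vee\rangle=\gamma_i\,H_{i,\pi}(t),
\]
which follows from $\langle \Lambda^Y_k,\alpha_j^\vee\rangle=\delta_{kj}$ together with the defining formula $\psi(\Lambda_{i})=\gamma_{i}\sum_{k\in\sigma(i)}\Lambda^Y_k$. Thus the $Y$-height of $\Psi(\pi)$ along any node $j$ of the orbit $\sigma(i)$ is a common scalar multiple, by the folding constant $\gamma_i$, of the $X$-height of $\pi$ along $i$. Taking $t=1$ gives $\op{wt}(\Psi(\pi))=\Psi(\pi)(1)=\psi(\pi(1))=\psi(\op{wt}(\pi))$ by linearity of $\psi$, so $\Psi$ intertwines weights; applying this to the straight-line path $\pi_\lambda(t)=t\lambda$ identifies $\Psi(\pi_\lambda)$ with the straight-line path $t\mapsto t\,\psi(\lambda)$, whence $\psi(\lambda)\in\Lambda_Y^+$ and $\Psi(\mathcal{P}(\lambda))\subseteq\mathcal{P}(\psi(\lambda))$.

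For the intertwining, observe that the operators $\tilde e_j,\tilde f_j$ read only $H_{j,\Psi(\pi)}$, and this equals $\gamma_i H_{i,\pi}$, so the local-minimum data governing the $j$-action on $\Psi(\pi)$ is exactly that governing the $i$-action on $\pi$, uniformly rescaled by $\gamma_i$. Consequently a single application of $f_i$ to $\pi$ corresponds to exactly $\gamma_i$ successive applications of $\tilde f_j$ on the scaled profile, that is, to $\tilde f_j^{\gamma_i}$; and because the nodes of $\sigma(i)$ are pairwise non-adjacent in $Y$ (an intrinsic feature of admissible foldings), the operators $\tilde f_j$, $j\in\sigma(i)$, act in disjoint directions and commute, so the product $\prod_{j\in\sigma(i)}\tilde f_j^{\gamma_i}=f_i^v$ is well defined and satisfies $\Psi(f_i\pi)=f_i^v\Psi(\pi)$, with the convention that both sides vanish simultaneously. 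The same argument applied to $e_i$ gives $\Psi(e_i\pi)=e_i^v\Psi(\pi)$, and the rescaling of the height minima yields $\tilde\varepsilon_j(\Psi(\pi))=\gamma_i\varepsilon_i(\pi)$ and $\tilde\varphi_j(\Psi(\pi))=\gamma_i\varphi_i(\pi)$, matching the normalizations $\varepsilon_i=\gamma_i^{-1}\tilde\varepsilon_j$, $\varphi_i=\gamma_i^{-1}\tilde\varphi_j$ of \Cref{def:virtuality}.

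Finally, injectivity of $\Psi$ is immediate: $\psi$ is injective, since the images $\psi(\Lambda_i)$ are supported on the disjoint orbits $\sigma(i)$ and are therefore linearly independent, and a path in $\mathcal{P}(\lambda)$ is determined by its tuple of height functions; hence $\Psi$ is a bijection onto $V$ and, by the previous paragraph, a crystal isomorphism, i.e.\ a virtualization map. I expect the main obstacle to be the matching step in the third paragraph: precisely reconciling one Littelmann reflection on the $X$-side with the iterated operator $\tilde f_j^{\gamma_i}$ on the $\gamma_i$-scaled height function. This requires verifying that the intermediate paths $\tilde f_j^{a}\Psi(\pi)$ for $0<a<\gamma_i$ are genuine nonzero LS paths and that the cumulative reflection reproduces exactly the segment reflected by $f_i$; carrying this out uniformly across the table, in particular for the nodes with $\gamma_i>1$ and at the branching point, is where the real work lies, and is the content one must extract from the path-operator combinatorics of \cite{PS18,pathsandrootoperators}.
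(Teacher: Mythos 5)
First, a point of context: the paper does not prove this statement at all --- it is quoted as a theorem of Pan--Scrimshaw and used as a black box --- so your proposal can only be measured against the argument in \cite{PS18} itself. Your strategy is in fact the same as theirs: the scaling identity $H_{j,\Psi(\pi)}(t)=\gamma_i H_{i,\pi}(t)$ for $j\in\sigma(i)$ is correct and is the engine of the whole proof (note it silently corrects the paper's typo $H_{i,\pi}(t)=\langle t,\alpha_i^\vee\rangle$, which should read $\langle \pi(t),\alpha_i^\vee\rangle$); the pairwise orthogonality of the nodes in each orbit $\sigma(i)$ does hold for every folding in the paper's table (it would fail for the excluded folding of $A_{2n}$), so the commutation of the $\tilde f_j$, $j\in\sigma(i)$, is fine; and the weight compatibility, the rescalings $\tilde\varepsilon_j=\gamma_i\varepsilon_i$, $\tilde\varphi_j=\gamma_i\varphi_i$, and the injectivity of $\Psi$ are all handled correctly.

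The genuine gap is the one you flag yourself, and it is not a peripheral detail: the claim that one application of $f_i$ on the $X$-side corresponds under $\Psi$ to $\prod_{j\in\sigma(i)}\tilde f_j^{\gamma_i}$ on the $Y$-side \emph{is} the content of the theorem, and your proposal asserts it from the rescaled minimum data and then defers the verification to \cite{PS18,pathsandrootoperators} --- which is circular, since the statement being proven is precisely the theorem of \cite{PS18}. To close it, the standard route is: since a root operator changes only the $\alpha$-component of a path, write $(f_i\pi)(t)=\pi(t)-g(t)\alpha_i$ and $\bigl(\tilde f_j^{\gamma_i}\Psi(\pi)\bigr)(t)=\Psi(\pi)(t)-h(t)\alpha_j$, where $g$ is a functional of $H_{i,\pi}$ alone and $h$ is the same type of functional of $\gamma_iH_{i,\pi}$; using $\psi(\alpha_i)=\gamma_i\sum_{j\in\sigma(i)}\alpha_j$, the intertwining reduces to the one-variable identity $h=\gamma_i g$. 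Proving that identity is where Littelmann's integrality properties enter (minima of $H_{i,\pi}$ over the path model are integers, so minima of $\gamma_iH_{i,\pi}$ are multiples of $\gamma_i$, which is what makes the $\gamma_i$ successive reflections of $\tilde f_j^{\gamma_i}$ concatenate into the single reflection performed by $f_i$, with no intermediate vanishing). A smaller, fixable ordering issue: the containment $\Psi(\mathcal{P}(\lambda))\subseteq\mathcal{P}(\psi(\lambda))$ does not follow from the weight computation in your second paragraph; since $\mathcal{P}(\psi(\lambda))$ is generated from the straight-line path by the $Y$-root operators, that containment is only available \emph{after} the intertwining is established, so it must come last, not first.
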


The principal aim of this section is to describe the action of the cactus group in terms of the virtualization map of Pan--Scrimshaw. For this, given a connected subdiagram $I \subset X$, let 

\[ \tilde{\xi}_{\sigma(I)} := \prod \xi^{Y}_{\tilde{I}}\]

\noindent
where $\xi^{Y}_{\tilde{I}}$ are the partial Sch\"utzenberger--Lusztig involutions in $\mathcal{P}(\psi(\lambda))$ and the product is taken over the connected components $\tilde{I}$ of $\sigma(I)$. Our next aim is to prove the following result, which generalizes \cite[Theorem 5, Theorem 6, Section 9.5]{AzenhasTarighatTorres22}. 

\begin{theorem}
\label{thm:commutativediagram}
Let $\lambda \in \Lambda_{X}^{+}$ and $\mathcal{P}(\lambda)$ the corresponding Littelmann path model. Then the following diagram commutes 

\[
\vcenter{\hbox{\begin{tikzpicture}[scale=0.79]
\node(tl) at (0,3){$\mathcal{P}(\lambda)$};
\node(tr) at (5,3){$\mathcal{P}(\psi(\lambda))$};
\node(bl) at (0,0){$\mathcal{P}(\lambda)$};
\node(br) at (5,0){$\mathcal{P}(\psi(\lambda))$};
\draw[->](tl)--node[above]{$\Psi$}(tr);
\draw[->](bl)--node[below]{$\Psi$}(br);

\draw[->](tl)--node[left]{$\xi^{X}_{I}$}(bl);
\draw[->](tr)--node[right]{$\tilde{\xi}_{\sigma(I)}$}(br);
\end{tikzpicture}.}}
\]

Moreover, the left inverse $\Psi^{-1}$ can be explicitly computed on $\tilde{\xi}^{Y}_{\sigma(I)}(\Psi(\mathcal{P}(\lambda)))$.
\end{theorem}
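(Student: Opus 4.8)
The plan is to prove the commutativity of the diagram by reducing it, via the Pan--Scrimshaw virtualization theorem (\Cref{thm:pathvirtualization}), to a statement about how the partial Sch\"utzenberger--Lusztig involution interacts with the virtual root operators. The key observation is that $\xi^{X}_{I}$ is characterized by its intertwining relations $\xi_{I}(e_{j}) = f_{\theta_{I}(j)}(\xi_{I})$ and $\xi_{I}(f_{j}) = e_{\theta_{I}(j)}(\xi_{I})$ for $j \in I$, together with its effect on weights. Since $\Psi$ is a virtualization map, the $\mathfrak{g}_{X}$-crystal operators $e_{j}, f_{j}$ on $\mathcal{P}(\lambda)$ correspond under $\Psi$ to the virtual operators $e^{v}_{j} = \prod_{k \in \sigma(j)} \tilde{e}^{\gamma_{j}}_{k}$ and $f^{v}_{j}$ on the image. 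So the first step is to translate the defining relations of $\xi^{X}_{I}$ into relations among virtual operators under $\Psi$, and the second step is to verify that the composite $\tilde{\xi}_{\sigma(I)} = \prod \xi^{Y}_{\tilde{I}}$ satisfies exactly these same virtual intertwining relations. By the uniqueness clause in the definition of the partial Sch\"utzenberger--Lusztig involution, this would force $\Psi \circ \xi^{X}_{I} = \tilde{\xi}_{\sigma(I)} \circ \Psi$.

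First I would pin down how $\tilde{\xi}_{\sigma(I)}$ acts on the virtual operators. For a single connected component $\tilde{I}$ of $\sigma(I)$, Halacheva's relations give $\xi^{Y}_{\tilde{I}}(\tilde{e}_{k}) = \tilde{f}_{\theta_{\tilde{I}}(k)}(\xi^{Y}_{\tilde{I}})$ for $k \in \tilde{I}$. The product $\tilde{\xi}_{\sigma(I)}$ ranges over all components, and because distinct components $\tilde{I}_{1}, \tilde{I}_{2}$ are disjoint and non-adjacent in $Y$, the involutions $\xi^{Y}_{\tilde{I}_{1}}, \xi^{Y}_{\tilde{I}_{2}}$ commute and each acts only on its own color set. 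The next step is to compute, for $j \in I$, the effect of $\tilde{\xi}_{\sigma(I)}$ on the whole product $e^{v}_{j} = \prod_{k \in \sigma(j)} \tilde{e}^{\gamma_{j}}_{k}$. Here the combinatorial content established in \Cref{lem:thirdvirtualrelation} reappears: the Dynkin automorphisms $\theta_{\tilde{I}}$ acting on the various $\tilde{I}$-components permute the colors $k \in \sigma(j)$ among themselves (possibly swapping the two components when $x_{0} \notin I$ and $\sigma(I)$ splits, via identity \eqref{localclaim1}), so that $\tilde{\xi}_{\sigma(I)}$ sends $e^{v}_{j}$ to $f^{v}_{\theta_{I}(j)}$ composed with $\tilde{\xi}_{\sigma(I)}$. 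This is precisely the virtual analogue of the intertwining relation, and it is the computation I expect to carry the main weight of the argument.

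I would organize this verification by the same case division as in \Cref{lem:thirdvirtualrelation}: whether $\sigma(I)$ is connected or splits into two components, and whether the branching point $x_{0}$ lies in $I$. In the connected case $\theta_{I} = \op{Id}$ and $\theta_{\sigma(I)}$ is correspondingly trivial or the folding automorphism, and the colors in $\sigma(j)$ are fixed or internally permuted; in the split case the two components are interchanged by $\theta_{\tilde{I}_{1}}, \theta_{\tilde{I}_{2}}$ exactly as in \eqref{localclaim1}. In each case the exponents $\gamma_{j}$ are preserved because the folding is edge-preserving, so the powers match up. Once the intertwining relations and the weight condition $\op{wt}(\tilde{\xi}_{\sigma(I)}(\Psi(\pi))) = w^{\sigma(I)}_{0}(\op{wt}(\Psi(\pi)))$ are checked (the latter following from $\Psi$ being weight-compatible via $\psi$ together with the known weight action of each $\xi^{Y}_{\tilde{I}}$), uniqueness closes the diagram.

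The hard part will be the bookkeeping in the split case, where $\sigma(I) = \tilde{I}_{1} \sqcup \tilde{I}_{2}$ and one must track simultaneously how $\theta_{\tilde{I}_{1}}$ and $\theta_{\tilde{I}_{2}}$ act on the paired colors of $\sigma(j)$, ensuring that the product over components in $\tilde{\xi}_{\sigma(I)}$ matches the product over $\sigma(\theta_{I}(j))$ in $f^{v}_{\theta_{I}(j)}$; this is where \eqref{localclaim1} is indispensable and where a genuine case-by-case check across the five folding pairs is unavoidable. For the final assertion on the explicit computation of $\Psi^{-1}$, I would use that $\Psi$ is an injective virtualization map with image a recognizable sub-path-set: since $\Psi(\pi)(t) = \sum_{i} H_{i,\pi}(t)\psi(\Lambda_{i})$ records the same height functions $H_{i,\pi}$ as $\pi$, and the $\tilde{\xi}^{Y}_{\sigma(I)}$-image again lies in the virtual locus by the commutativity just proved, I can read off the $\mathfrak{g}_{X}$-path by inverting the linear map $\Psi$ on heights --- that is, $\Psi^{-1}$ extracts the coefficients $H_{i,\pi}$ from the $Y$-path and reassembles the $X$-path, which is well defined precisely on $\tilde{\xi}^{Y}_{\sigma(I)}(\Psi(\mathcal{P}(\lambda)))$ because that set sits inside $\Psi(\mathcal{P}(\lambda))$.
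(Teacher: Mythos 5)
Your core computation---showing that conjugation by $\tilde{\xi}_{\sigma(I)}$ turns each virtual operator $e^{v}_{j}$, $j \in I$, into $f^{v}_{\theta_{I}(j)}$, organized by the same connected/split case division as \Cref{lem:thirdvirtualrelation} and using (\ref{localclaim1})---is exactly the computational heart of the paper's proof; the paper packages it differently, evaluating both $\Psi\circ\xi^{X}_{I}$ and $\tilde{\xi}_{\sigma(I)}\circ\Psi$ directly on a path $\pi = f_{i_{r}}\cdots f_{i_{1}}\pi_{\nu}$ via \Cref{thm:pathvirtualization}. The genuine gap is in your closing step, ``uniqueness closes the diagram.'' The uniqueness clause in the definition of $\xi^{X}_{I}$ is uniqueness among involutions of $\mathcal{P}(\lambda)$ itself; to invoke it you must first know that $\tilde{\xi}_{\sigma(I)}$ maps the virtual image $V=\Psi(\mathcal{P}(\lambda))$ into itself, so that $\Psi^{-1}\circ\tilde{\xi}_{\sigma(I)}\circ\Psi$ is a well-defined involution of $\mathcal{P}(\lambda)$ to which the characterization can be applied. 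Your intertwining identities alone do not give this: writing $x = f^{v}_{i_{r}}\cdots f^{v}_{i_{1}}\Psi(\pi_{\nu})$ and pushing $\tilde{\xi}_{\sigma(I)}$ through the $f^{v}$'s leaves $e^{v}_{\theta_{I}(i_{r})}\cdots e^{v}_{\theta_{I}(i_{1})}$ applied to $\tilde{\xi}_{\sigma(I)}(\Psi(\pi_{\nu}))$, and a priori that element is only the $\sigma(I)$-lowest-weight element of the ambient $Y$-crystal $\sigma(I)$-component of $\Psi(\pi_{\nu})$, which is in general strictly larger than the virtual component; nothing formal places it in $V$. In your last paragraph you obtain image preservation ``by the commutativity just proved,'' but commutativity is precisely what the uniqueness step is supposed to deliver---using it there is circular.

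The missing ingredient is an anchor at extremal elements: you must check that $\tilde{\xi}_{\sigma(I)}(\Psi(\pi_{\nu})) = \Psi(\xi^{X}_{I}(\pi_{\nu}))$ for each $I$-highest-weight path $\pi_{\nu}$. This is where the virtualization of string lengths enters: since $\tilde{\varepsilon}_{k}(\Psi(\pi)) = \gamma_{j}\varepsilon_{j}(\pi)$ and $\tilde{\varphi}_{k}(\Psi(\pi)) = \gamma_{j}\varphi_{j}(\pi)$ for $k \in \sigma(j)$, the path $\Psi(\pi_{\nu})$ is $\sigma(I)$-highest-weight in $\mathcal{P}(\psi(\lambda))$, and $\Psi(\xi^{X}_{I}(\pi_{\nu}))$ is annihilated by every $\tilde{f}_{k}$ with $k \in \sigma(I)$; both it and $\tilde{\xi}_{\sigma(I)}(\Psi(\pi_{\nu}))$ lie in the same irreducible component of the $\sigma(I)$-restriction (each is reached from $\Psi(\pi_{\nu})$ by $\sigma(I)$-coloured operators), hence both equal its unique lowest-weight element, by normality. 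With this anchor, your induction through the conjugation identities does close the diagram, and your treatment of the ``moreover'' statement (inverting $\Psi$ by reading off the height functions $H_{i,\pi}$) is then fine and matches the paper's. Note, however, that once you have the anchor plus the conjugation identities you have in effect reconstructed the paper's direct computation, so the appeal to uniqueness buys nothing; the anchor is also the point the paper itself elides when its computation silently drops the involutions $\xi^{Y}_{\tilde{I}}$ upon reaching $\Psi(\pi_{\nu})$.
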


\begin{proof}
First note that since the Littelmann  path model $\mathcal{P}(\psi(\lambda))$ is stable under the root operators $\tilde{e}_{i},\tilde{f}_{i}$, it is also stable under the action of the operators $\tilde{\xi}^{Y}_{\sigma(I)}$ for $I \subset X$ connected. Therefore, all paths in $\tilde{\xi}^{Y}_{\sigma(I)}(\Psi(\mathcal{P}(\lambda)))$ must be of the form (\ref{panscrimshawmap}), so the left inverse $\Psi^{-1}$ can be explicitly computed on $\tilde{\xi}^{Y}_{\sigma(I)}(\Psi(\mathcal{P}(\lambda)))$, simply by writing out the corresponding path in this form. We now proceed to show that the diagram commutes.  Let $\pi_{\nu} \in \mathcal{P}(\lambda)_{I}$ be a highest weight path of weight $\op{wt}(\pi_{\nu}) = \pi_{\nu}(1) = \nu$ and  $\pi = f_{i_{r}}\cdots f_{i_{1}}\pi_{\nu}$ for $i_{j} \in I, j \in [1,r]$. Recall that 
\
\[ \xi^{X}_{I}(\pi) = e_{\theta_{I}(i_{r})}\cdots e_{\theta_{I}(i_{1})}\pi_{\nu}.\]

Therefore by \Cref{thm:pathvirtualization} we have 

\begin{align*}
\Psi(\xi^{X}_{I}(b)) = e^{v}_{\theta_{I}(i_{r})}\cdots e^{v}_{\theta_{I}(i_{1})}\Psi(\pi_{\nu}).
\end{align*}

Now, by \Cref{def:virtuality} and \Cref{thm:pathvirtualization} we have 

\begin{align*}
\tilde{\xi}_{\sigma(I)} (\Psi(b))   &=   \prod \xi^{Y}_{\tilde{I}} (\Psi(\pi)) \\
                                                 &=   \prod \xi^{Y}_{\tilde{I}} (\underset{j \in \sigma(i_{r})}{\prod} \tilde{f}^{\gamma_{i_{r}}}_{j} \cdots \underset{j \in \sigma(i_1)}{\prod} \tilde{f}^{\gamma_{i_1}}_{j} (\Psi(\pi_{\nu})))
\end{align*}

\noindent where the product is taken over the connected components $\tilde{I}$ of $\sigma(I)$. To continue our computations we  consider three cases separately: 

\begin{enumerate}
\item The subdiagram  $\sigma(I)  = \tilde{I} \subset Y$ is connected. Then $\theta_{I} = \op{Id}$, we have $\gamma_{i_{j}} = 1$ if and only if $\sigma(i_{j}) = \left\{\tilde{i}^{1}_{j}, \tilde{i}^{2}_{j}\right\}$ or $\sigma(i_{j}) = \left\{\tilde{i}^{1}_{j}, \tilde{i}^{2}_{j}, \tilde{i}^{3}_{j}\right\}$ and $\gamma_{i_{j}} = 2,3$ if and only if $\sigma(i_{j}) = \left\{\tilde{i}_{j}\right\}$. In case $\gamma_{i_{j}} = 1$ we have $\theta_{\tilde{I}}(\tilde{i}^{1}_{j}) = \tilde{i}^{2}_{j} $ and $\theta_{\tilde{I}}(\tilde{i}^{2}_{j}) = \tilde{i}^{1}_{j}$. Moreover, the root operators $\tilde{e}_{\tilde{i}^{1}_{j}}$ and $\tilde{e}_{\tilde{i}^{2}_{j}}$ commute. In case $\gamma_{i_{j}} = 2,3$ we have $\theta_{\tilde{I}}(\tilde{i}_{j}) = \tilde{i}_{j}$. All together this implies: 
 
 \begin{align*}
\tilde{\xi}_{\sigma(I)} (\Psi(b))  &=   \xi^{Y}_{\tilde{I}} ({f}^{v}_{{i_{r}}} \cdots {f}^{v}_{{i_{1}}} (\Psi(\pi_{\nu}))) \\
                                                &=      e^{v}_{\theta_{I}(i_{r})}\cdots e^{v}_{\theta_{I}(i_{1})}\Psi(\pi_{\nu}) \\
                                                &= \Psi(\xi^{X}_{I}(b)).
\end{align*}

\item The subdiagram  $\sigma(I) \subset Y$ is disconnected. Assume $\theta_{Y} = \operatorname{aut}$. In this case we must have $|\sigma(I)| = 2|I|$, that is, $\sigma(I) = \tilde{I}_{1} \sqcup \tilde{I}_{2}$ is a disconnected union. In particular all root operators $\tilde{e}_{s}$, $\tilde{f}_{t}$ with $s,t \in \tilde{I}_{1}$ commute with the operators $\tilde{e}_{u}$, $\tilde{f}_{v}$, with $u,v \in \tilde{I}_{2}$. Moreover $\gamma_{i_{j}} = 1$ for all $j \in [1,r]$. Altogether, this implies: 

\begin{align*}
\tilde{\xi}_{\sigma(I)} (\Psi(b))  &=   \xi^{Y}_{\tilde{I}_{1}}  \xi^{Y}_{\tilde{I}_{2}} ({f}^{v}_{{i_{r}}} \cdots {f}^{v}_{{i_{1}}} (\Psi(\pi_{\nu}))) \\
                                                &=    \xi^{Y}_{\tilde{I}_{1}}  \xi^{Y}_{\tilde{I}_{2}} (\tilde{f}_{{i^{1}_{r}}}\tilde{f}_{{i^{2}_{r}}} \cdots \tilde{f}_{{i^{1}_{1}}} \tilde{f}_{{i^{2}_{1}}}(\Psi(\pi_{\nu}))) \\
                                                &=    \xi^{Y}_{\tilde{I}_{1}}  \xi^{Y}_{\tilde{I}_{2}} (\tilde{f}_{{i^{2}_{r}}}\cdots \tilde{f}_{{i^{2}_{1}}}  \tilde{f}_{{i^{1}_{r}}} \cdots \tilde{f}_{{i^{1}_{1}}}(\Psi(\pi_{\nu})))\\ 
                                                &=    \xi^{Y}_{\tilde{I}_{1}}  (\tilde{e}_{\theta_{\tilde{I}_{2}}({i^{2}_{r}})}\cdots \tilde{e}_{\theta_{\tilde{I}_{2}}({i^{2}_{1}})}  \tilde{f}_{{i^{1}_{r}}} \cdots \tilde{f}_{{i^{1}_{1}}}(\Psi(\pi_{\nu})))     \\                
                                                &=    \xi^{Y}_{\tilde{I}_{1}}  ( \tilde{f}_{{i^{1}_{r}}} \cdots \tilde{f}_{{i^{1}_{1}}}\tilde{e}_{\theta_{\tilde{I}_{2}}({i^{2}_{r}})}\cdots \tilde{e}_{\theta_{\tilde{I}_{2}}({i^{2}_{1}})} (\Psi(\pi_{\nu})))      \\    
                                                &=     \tilde{e}_{\theta_{\tilde{I}_{1}}({i^{1}_{r}})} \cdots \tilde{e}_{\theta_{\tilde{I}}({i^{1}_{1}})}\tilde{e}_{\theta_{\tilde{I}_{2}}({i^{2}_{r}})}\cdots \tilde{e}_{\theta_{\tilde{I}_{2}}({i^{2}_{1}})} (\Psi(\pi_{\nu})) \\
                                                &=     \tilde{e}_{\theta_{\tilde{I}_{1}}({i^{1}_{r}})} \tilde{e}_{\theta_{\tilde{I}_{2}}({i^{2}_{r}})} \cdots \tilde{e}_{\theta_{\tilde{I}_{1}}({i^{1}_{1}})}\tilde{e}_{\theta_{\tilde{I}_{2}}({i^{2}_{1}})} (\Psi(\pi_{\nu})).
\end{align*}
\end{enumerate}
 The case $\theta_{Y} = \operatorname{Id}$ is very similar.
\end{proof}

\begin{corollary}
The virtual cactus group $J^{v}_{X}$ acts on $\mathcal{P}(\psi(\lambda))$ and preserves the image $\Psi(\mathcal{P}(\lambda))$ of $\Psi$. 
\end{corollary}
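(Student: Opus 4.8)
The plan is to assemble this corollary from the three main results already established, since the essential work has been done. First I would observe that $\mathcal{P}(\psi(\lambda))$ is a normal $\mathfrak{g}_{Y}$-crystal, so by \Cref{thm:halacheva} the cactus group $J_{Y}$ acts on it via the assignment $s^{Y}_{\tilde{I}} \mapsto \xi^{Y}_{\tilde{I}}$. By \Cref{thm:virtualcacti} the map $\Phi: J_{X} \hookrightarrow J_{Y}$, $s_{I} \mapsto \tilde{s}_{I} = \prod s^{Y}_{\tilde{I}}$, is a group monomorphism, and its image $\Phi(J_{X})$ is isomorphic to the virtual cactus group $J^{v}_{X}$ via $\tilde{s}_{I} \mapsto s_{\sigma(I)}$. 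Restricting the $J_{Y}$-action to the subgroup $\Phi(J_{X})$ therefore yields an action of $J^{v}_{X}$ on $\mathcal{P}(\psi(\lambda))$, under which the generator $s_{\sigma(I)}$ acts by $\tilde{\xi}_{\sigma(I)} = \prod \xi^{Y}_{\tilde{I}}$. Since this action is obtained by restriction of a genuine group action, no relations need to be rechecked: they are inherited automatically from the relations in $J_{Y}$.

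For the second assertion, I would invoke \Cref{thm:commutativediagram} directly. The commuting square says exactly that $\tilde{\xi}_{\sigma(I)} \circ \Psi = \Psi \circ \xi^{X}_{I}$ for every connected $I \subset X$. Hence for any $\pi \in \mathcal{P}(\lambda)$ we have $\tilde{\xi}_{\sigma(I)}(\Psi(\pi)) = \Psi(\xi^{X}_{I}(\pi)) \in \Psi(\mathcal{P}(\lambda))$, so each generator $s_{\sigma(I)}$ maps the image $\Psi(\mathcal{P}(\lambda))$ into itself. Because each $\tilde{\xi}_{\sigma(I)}$ is an involution, being a product of partial Sch\"utzenberger--Lusztig involutions acting on the mutually disjoint connected components of $\sigma(I)$, its restriction to $\Psi(\mathcal{P}(\lambda))$ is a bijection of that set onto itself, realizing $\Psi \circ \xi^{X}_{I} \circ \Psi^{-1}$ there, where $\Psi^{-1}$ makes sense since $\Psi$ is injective by \Cref{thm:pathvirtualization}. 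As the generators of $J^{v}_{X}$ preserve $\Psi(\mathcal{P}(\lambda))$, so does every element of the group.

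I do not expect a serious obstacle here, as the statement is a formal consequence of the preceding theorems; the only point requiring care is the bookkeeping that identifies the restricted $J_{Y}$-action on the subgroup $\Phi(J_{X})$ with the $J^{v}_{X}$-action sending $s_{\sigma(I)} \mapsto \tilde{\xi}_{\sigma(I)}$, using the isomorphism $J^{v}_{X} \cong \Phi(J_{X})$ recorded after \Cref{def:virtualcactusgroup}. One could optionally remark that, via the commuting square, the induced $J^{v}_{X}$-action on $\Psi(\mathcal{P}(\lambda))$ is conjugate by $\Psi$ to the $J_{X}$-action on $\mathcal{P}(\lambda)$, which is the conceptual content that makes the corollary worth stating.
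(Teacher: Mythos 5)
Your proposal is correct and matches the paper's (implicit) argument: the paper states this corollary without proof precisely because it is the formal assembly you describe, combining \Cref{thm:halacheva} applied to the normal $\mathfrak{g}_{Y}$-crystal $\mathcal{P}(\psi(\lambda))$, the monomorphism and the identification $J^{v}_{X}\cong\Phi(J_{X})$ from \Cref{thm:virtualcacti}, and the commuting square of \Cref{thm:commutativediagram} to see that each $\tilde{\xi}_{\sigma(I)}$ maps $\Psi(\mathcal{P}(\lambda))$ into (and, being an involution, onto) itself. Your added care about why the relations need not be rechecked and why the restriction is a bijection is exactly the right bookkeeping.
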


%\bibliography{jbib}
%\bibliographystyle{alpha}

\end{document}